\setlist[enumerate]{parsep=0ex}
\setlist[enumerate,1]{label=(\roman*)}
\DeclareMathOperator{\dvol}{dvol}
\DeclareMathOperator{\Imaginary}{Im}
\DeclareMathOperator{\Real}{Re}
\newcommand{\defn}[1]{{\boldmath\bfseries#1}}
\newcommand{\oZ}{\overline{Z}}
\newcommand{\oz}{\overline{z}}
\newcommand{\cM}{\widetilde{M}}
\newcommand{\cT}{\widetilde{T}}
\newcommand{\cxi}{\widetilde{\xi}}
\newcommand{\lp}{\langle}
\newcommand{\rp}{\rangle}
\newcommand{\lv}{\lvert}
\newcommand{\rv}{\rvert}
\newcommand{\bCP}{\mathbb{C}P}
\newcommand{\dbbar}{\overline{\partial}_b}
\newcommand{\mQ}{\mathcal{Q}}
\newcommand{\bC}{\mathbb{C}}
\newcommand{\bN}{\mathbb{N}}
\newcommand{\bZ}{\mathbb{Z}}
\newcommand{\sP}{\mathscr{P}}
\def\sideremark#1{\ifvmode\leavevmode\fi\vadjust{\vbox to0pt{\vss
 \hbox to 0pt{\hskip\hsize\hskip1em
 \vbox{\hsize3cm\tiny\raggedright\pretolerance10000
 \noindent #1\hfill}\hss}\vbox to8pt{\vfil}\vss}}}
\newcommand{\suchthat}{\mathrel{}\middle|\mathrel{}}
\newcommand{\suchthatcolon}{\mathrel{}:\mathrel{}}
\newtheorem{theorem}{Theorem}[section]
\newtheorem{lemma}[theorem]{Lemma}
\newtheorem{corollary}[theorem]{Corollary}
\theoremstyle{definition}
\newtheorem{definition}[theorem]{Definition}
\theoremstyle{remark}
\newtheorem{remark}[theorem]{Remark}
\numberwithin{equation}{section}
\begin{document}

\title{A CR invariant sphere theorem}
\author{Jeffrey S. Case}
\address{Department of Mathematics \\ Penn State University \\ University Park, PA 16802 \\ USA}
\email{jscase@psu.edu}
\author{Paul Yang}
\address{Department of Mathematics \\ Princeton University \\ Princeton, NJ 08540 \\ USA}
\email{yang@math.princeton.edu}
\keywords{$Q^\prime$-curvature; CR Yamabe constant; CR sphere theorem}
\subjclass[2020]{Primary 32V99; Secondary 53C18, 53D35}
\begin{abstract}
 We prove that every closed, universally embeddable CR three-manifold with nonnegative Yamabe constant and positive total $Q^\prime$-curvature is contact diffeomorphic to a quotient of the standard contact three-sphere.
 We also prove that every closed, embeddable CR three-manifold with zero Yamabe constant and nonnegative total $Q^\prime$-curvature is CR equivalent to a compact quotient of the Heisenberg group with its flat CR structure.
\end{abstract}
\maketitle

\section{Introduction}
\label{sec:intro}

The Gauss--Bonnet Theorem states that if $(M^2,g)$ is a closed surface, then
\begin{equation*}
 2\pi\chi(M) = \int_M Q^g\,\dvol_g ,
\end{equation*}
where $\chi(M)$ is the Euler characteristic of $M$ and $Q^g$ is the Gauss curvature of $g$.
In particular, the nonnegativity of the conformal invariant $\int Q$ has strong topological implications:
\begin{enumerate}
 \item If $\int Q>0$, then $M$ is diffeomorphic to a quotient of the two-sphere.
 \item If $\int Q=0$, then $(M,g)$ is conformal to a two-torus.
\end{enumerate}
Of course, $\int Q$ actually determines the genus, and hence the topological type, of the universal cover of $M$.

On even-dimensional Riemannian manifolds, Branson's $Q$-curvature~\cite{Branson1995} is a good generalization of the Gauss curvature.
For example, on closed manifolds, the total $Q$-curvature $\int Q^g\,\dvol_g$ is conformally invariant~\cites{Branson1995,GrahamZworski2003}, proportional to the Euler characteristic on locally conformally flat manifolds~\cite{BransonGilkeyPohjanpelto1995}, and a linear combination of the Euler characteristic and integrals of local conformal invariants on general manifolds~\cite{Alexakis2012}.
The total $Q$-curvature also has some strong topological implications~\cites{Gursky1994,ChangGurskyYang2003,GuanViaclovskyWang2003}.
The most relevant to us is the sharp conformally invariant sphere theorem of Chang, Gursky and Yang~\cite{ChangGurskyYang2003}:

\begin{theorem}
 \label{cgy}
 Let $(M^4,g)$ be a closed Riemannian manifold with nonnegative Yamabe constant.
 If $\int Q \geq \int \lv W\rv^2$, then
 \begin{enumerate}
  \item $M$ is diffeomorphic to a quotient of the sphere;
  \item $(M,g)$ is conformal to a quotient of $(S^1\times S^3,g_{prod})$;
  \item $(M,g)$ is conformal to $(\bCP^2,g_{FS})$; or
  \item $(M,g)$ is conformal to a flat torus.
 \end{enumerate}
\end{theorem}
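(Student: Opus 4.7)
The plan is to reformulate the hypothesis via the Schouten tensor and then use fully nonlinear conformal geometry to upgrade it to a pointwise pinching that is sharp enough for Margerin's Ricci flow convergence theorem. In dimension four, Branson's $Q$-curvature satisfies $Q = 4\sigma_2(A) - \frac{1}{6}\Delta R$, where $A = \frac{1}{2}(\Ric - \frac{R}{6}g)$ is the Schouten tensor and $\sigma_2(A) = \frac{R^2}{96} - \frac{|E|^2}{8}$ is the second elementary symmetric function of its eigenvalues (with $E$ the traceless Ricci tensor). Combined with the Chern--Gauss--Bonnet identity $8\pi^2\chi(M) = \int \bigl(\tfrac{1}{4}|W|^2 + 4\sigma_2(A)\bigr)\,\dvol$, the hypothesis $\int Q \geq \int |W|^2$ becomes the conformally invariant integral inequality $\int \bigl(4\sigma_2(A) - |W|^2\bigr)\,\dvol \geq 0$.

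When the Yamabe constant is strictly positive, I would appeal to the existence theory for a modified $\sigma_2$-Yamabe problem to produce a conformal representative $g$ satisfying the pointwise pinching $\sigma_2(A^g) > \frac{1}{4}|W^g|^2$ together with $R^g > 0$. Unfolding via the formula above yields $R^2 > 12|E|^2 + 24|W|^2$, which is precisely the sharp pinching hypothesis of Margerin's theorem, so the normalized Ricci flow starting from $g$ converges exponentially to a metric of constant positive sectional curvature and gives conclusion (i). If the strict pinching degenerates on a nonempty set, an equality analysis of the underlying Bochner identity forces $(M,g)$ to be conformally either K\"ahler--Einstein on $\bCP^2$ (conclusion (iii)) or a locally product quotient of $(S^1 \times S^3, g_{prod})$ (conclusion (ii)). When the Yamabe constant vanishes, every nonnegative scalar curvature representative in the conformal class must be scalar-flat, and the Chern--Gauss--Bonnet identity together with $\int Q \geq \int |W|^2$ then forces $W \equiv 0$ and $E \equiv 0$; a scalar-flat locally conformally flat closed four-manifold is flat, giving conclusion (iv).

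The principal obstacle is the existence step: the modified $\sigma_2$-Yamabe equation is fully nonlinear and degenerate elliptic, and the required $C^2$ a priori estimates depend delicately on positivity of the Yamabe constant and on controlling proximity to the boundary of the admissible cone $\Gamma_2^+$. A secondary difficulty is the rigidity analysis in the equality cases, where one must distinguish the K\"ahler--Einstein and Einstein-product models via refined Bochner-type identities and characteristic-class constraints (notably the signature formula relating $\int |W^+|^2 - \int |W^-|^2$ to the topology of $M$).
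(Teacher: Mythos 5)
The first thing to say is that \cref{cgy} is not proved in this paper at all: it is quoted as background and attributed to Chang, Gursky and Yang \cite{ChangGurskyYang2003}, so there is no in-paper argument to compare your proposal against. Measured instead against the actual proof in the cited literature, your outline is essentially the right roadmap: the identity $Q = 4\sigma_2(A) - \frac{1}{6}\Delta R$ (your computation of $\sigma_2(A) = \frac{R^2}{96} - \frac{1}{8}\lv E\rv^2$ is correct) converts $\int Q \geq \int \lv W\rv^2$ into the conformally invariant inequality $\int(4\sigma_2(A) - \lv W\rv^2) \geq 0$; in the positive-Yamabe, strict-inequality case one solves a fully nonlinear equation of $\sigma_2$ type to obtain a pointwise pinching $\sigma_2(A) > c\lv W\rv^2$ with $R>0$, which is equivalent to Margerin's sharp curvature condition and yields the sphere quotient; the borderline cases produce $\bCP^2$ and quotients of $S^1\times S^3$; and the zero-Yamabe case follows by taking a scalar-flat Yamabe representative, for which $4\sigma_2(A) = -\frac{1}{2}\lv E\rv^2 \leq 0$ forces $E \equiv 0$ and $W \equiv 0$, hence flatness. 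That last argument is complete as you state it.

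The gap is that the two steps you label as ``obstacles'' are not peripheral difficulties but the entire content of the theorem. The existence of a conformal metric satisfying the pointwise pinching --- obtained in the literature by a fourth-order regularization of the degenerate elliptic $\sigma_2$ equation, with delicate a priori estimates keeping the solution in the admissible cone --- occupies one full paper, and the rigidity analysis in the borderline case (distinguishing $\bCP^2$ from quotients of $S^1\times S^3$ via the signature formula and refined Bochner identities) occupies another. Your proposal correctly names these ingredients but supplies neither, so it should be read as an accurate summary of the strategy of \cite{ChangGurskyYang2003} rather than a proof. One further caution: the constant in the pointwise pinching $\sigma_2(A) > \frac{1}{4}\lv W\rv^2$ and the resulting inequality $R^2 > 12\lv E\rv^2 + 24\lv W\rv^2$ depend on the normalization of the Weyl norm (tensor norm versus operator norm on two-forms differ by a factor of four), and matching this to Margerin's hypothesis requires checking conventions; the identification is correct but not automatic.
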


Here $W$ is the Weyl tensor --- which vanishes if and only if $(M,g)$ is locally conformally flat --- and the Yamabe constant $Y(M,[g])$ is a global conformal invariant which is positive (resp.\ zero) if and only if $g$ is conformal to a metric with positive (resp.\ zero) scalar curvature~\cite{LeeParker1987}.
In the last three cases of \cref{cgy} it holds that $\int Q = \int \lv W\rv^2$.
Moreover, in the second case of \cref{cgy}, $\int Q = 0$ and $Y(M,[g])>0$, and $g_{prod}$ denotes the product of the flat metric on $S^1$ with the the round metric on $S^3$;
in the third case of \cref{cgy}, $\int Q = 16\pi^2$ and $Y(M,[g])>0$, and $g_{FS}$ denotes the Fubini--Study metric;
and in the fourth case, $\int Q = 0$ and $Y(M,[g])=0$.

In this article we establish a partial CR analogue of \cref{cgy}.
There is a deep analogy between CR three-manifolds and conformal four-manifolds which suggests that the nonnegativity of the CR Yamabe constant~\cite{JerisonLee1987}, the CR Paneitz operator~\cite{GrahamLee1988}, and the total $Q^\prime$-curvature~\cites{CaseYang2012,Takeuchi2019} implies that the underlying CR three-manifold is either
\begin{enumerate}
 \item contact diffeomorphic to a quotient of the standard contact three-sphere;
 \item CR equivalent to a quotient of $S^1 \times S^2$ with its standard CR structure; or
 \item CR equivalent to a compact quotient of the Heisenberg group.
\end{enumerate}
We verify a weaker version of this classification by imposing either the vanishing of the CR Yamabe constant --- in which case the underlying CR three-manifold is CR equivalent to a compact quotient of the Heisenberg group --- or by imposing the stronger assumptions that the CR Yamabe constant and total $Q^\prime$-curvatures are positive, and that all finite covers of the CR manifold are embeddable.
To make this precise requires some terminology.

A \defn{CR three-manifold} is a pair $(M^3,T^{1,0})$ of a three-manifold $M^3$ and a complex rank one distribution $T^{1,0} \subset \bC TM$ such that
\begin{enumerate}
 \item $T^{1,0} \cap T^{0,1} = \{ 0\}$ for $T^{0,1} := \overline{T^{1,0}}$, and
 \item if $Z$ is a nowhere-vanishing local section of $T^{1,0}$, then $\bigl\{ Z, \oZ, [Z,\oZ] \bigr\}$ locally generates $\bC TM$.
\end{enumerate}
Set $\xi := \Real(T^{1,0}\oplus T^{0,1})$.
Let $\theta$ be a local \defn{contact form};
i.e.\ a local real one-form with $\ker\theta=\xi$.
The condition that $\bigl\{ Z, \oZ, [Z,\oZ] \bigr\}$ generates $\bC TM$ implies that $\theta \wedge d\theta$ is nowhere vanishing.
In particular, $(M,\xi)$ is a \defn{contact three-manifold}.
Note that the CR structure $T^{1,0}$ orients $\xi$, and hence $(M,\xi)$ admits a global contact form.
We say that a contact form $\theta$ is \defn{positive} if $i\theta([Z,\oZ])>0$ for any nowhere-vanishing local section $Z$ of $T^{1,0}$.

The \defn{standard CR three-sphere} $(S^3,T^{1,0})$ is the unit sphere $S^3 \subset \bC^2$ with $T^{1,0} := \bC TS^3 \cap T^{1,0}\bC^2$.
The \defn{standard contact three-sphere} $(S^3,\xi)$ is the corresponding contact manifold.
Note that $S^3$ supports many contact structures;
the standard contact structure is the only one which arises by realizing $S^3$ as the boundary of a symplectic manifold~\cite{Eliashberg1990}.

A CR manifold $(M^3,T^{1,0})$ is \defn{embeddable} if there is an embedding $\iota \colon M \to \bC^N$, $N \gg 1$, such that $\iota_\ast T^{1,0} \subset T^{1,0}\bC^N$.
This is a global property of $(M^3,T^{1,0})$, and is equivalent to the requirement that the $\dbbar$-operator has closed range~\cite{Kohn1986}.
Note that if $(M^3,T^{1,0})$ is embeddable, then $(M,\xi)$ is tight~\cites{Eliashberg1990,Gromov1985}.
We say that $(M^3,T^{1,0})$ is \defn{universally embeddable} if every finite cover of $(M^3,T^{1,0})$ is embeddable.
In particular, if $(M,T^{1,0})$ is universally embeddable, then $(M^3,\xi)$ is universally tight.
Not all embeddable CR three-manifolds are universally embeddable;
for example, there is an embeddable $\bZ_2$-quotient of the non-embeddable Rossi spheres~\cite{ChenShaw2001}.
Indeed, there are many lens spaces with tight contact structure which are not universally tight~\cites{Giroux2000,Honda2000}.

A \defn{pseudohermitian manifold} is a triple $(M^3,T^{1,0},\theta)$ consisting of a CR manifold and a choice of positive contact form.
This gives rise to a unique connection~\cites{Tanaka1975,Webster1978} which preserves $T^{1,0}$ and $\theta$.
The curvature of this connection determines a scalar pseudohermitian invariant, the \defn{Webster curvature}, which is analogous to the usual scalar curvature.
The CR Yamabe constant $Y(M,T^{1,0})$, introduced by Jerison and Lee~\cite{JerisonLee1987}, is a global CR invariant of $(M^3,T^{1,0})$ which is positive (resp.\ zero) if and only if there is a positive contact form $\theta$ with positive (resp.\ zero) scalar curvature.
The CR Yamabe constant is the CR analogue of the Yamabe constant.

A pseudohermitian manifold is \defn{$Q$-flat} if Hirachi's $Q$-curvature~\cite{Hirachi1990} vanishes.
If $(M^3,T^{1,0})$ is closed and embeddable, then the space of $Q$-flat contact forms is infinite-dimensional and parameterized by the space of CR pluriharmonic functions~\cite{Takeuchi2019}.
The \defn{$Q^\prime$-curvature}~\cites{CaseYang2012,Hirachi2013} is a pseudohermitian invariant whose total integral is independent of the choice of $Q$-flat contact form on a given closed, embeddable CR three-manifold~\cites{CaseYang2012,Takeuchi2019}.
Importantly, the total $Q^\prime$-curvature is a nontrivial invariant;
it equals $16\pi^2$ on the standard CR three-sphere, and more generally coincides~\cite{CaseYang2012} with the Burns--Epstein invariant~\cite{BurnsEpstein1988} on boundaries of strictly pseudoconvex domains in $\bC^2$.
The total $Q^\prime$-curvature is the CR analogue of the total $Q$-curvature.

We can now precisely state our CR analogues of \cref{cgy}.

First, we have the following CR sphere theorem for closed, universally embeddable CR three-manifolds with nonnegative CR Yamabe constant and positive total $Q^\prime$-curvature.

\begin{theorem}
 \label{sphere}
 Let $(M^3,T^{1,0})$ be a closed, universally embeddable CR manifold.
 If $\int Q^\prime>0$ and $Y(M,T^{1,0}) \geq 0$, then $(M^3,\xi)$ is contact diffeomorphic to a quotient of the standard contact three-sphere.
\end{theorem}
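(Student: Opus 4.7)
My plan is to reduce the theorem to the simply-connected case via finite covers, and then appeal to Perelman's resolution of the Poincaré conjecture together with Eliashberg's classification of tight contact structures on $S^3$. As a preliminary step, I would rule out $Y(M,T^{1,0})=0$ by invoking the companion theorem stated in the introduction: under $Y=0$ and $\int Q^\prime\geq 0$, the manifold $M$ is CR equivalent to a compact quotient of the Heisenberg group, on which the $Q^\prime$-curvature vanishes identically, contradicting $\int Q^\prime>0$. Hence I may assume $Y(M,T^{1,0})>0$ throughout.

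The analytical heart of the argument would be a sharp upper bound $\int Q^\prime\leq 16\pi^{2}$ on every closed embeddable CR three-manifold with positive CR Yamabe constant, with equality precisely on the standard CR sphere; here $16\pi^{2}$ is the value of the total $Q^\prime$-curvature on $(S^3,T^{1,0})$ noted in the introduction. This is the CR analogue of the sharp bound $\int Q\leq 16\pi^{2}$ for locally conformally flat four-manifolds with positive Yamabe constant. Its proof should draw on the CR positive mass theorem of Chanillo--Chiu--Yang for embeddable manifolds (which yields nonnegativity of the CR Paneitz operator in the positive-Yamabe embeddable regime) combined with an extremal analysis of an appropriate $Q^\prime$-energy functional. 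This step is what I expect to be the main technical obstacle.

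Granted such a sharp bound, I would leverage universal embeddability by a covering-space argument. For any finite cover $\pi\colon M^\prime\to M$ of degree $d$, the manifold $M^\prime$ is embeddable by hypothesis, still has positive CR Yamabe constant (a contact form of positive Webster scalar curvature lifts from $M$), and satisfies $\int_{M^\prime} Q^\prime=d\int_M Q^\prime$ by multiplicativity of the integral of a local pseudohermitian invariant under covers. Applying the sharp bound to $M^\prime$ yields $d\int_M Q^\prime\leq 16\pi^{2}$, so the degree $d$ is uniformly bounded. Since closed three-manifold groups are residually finite by Hempel's theorem (invoking Perelman's geometrization), and every infinite residually finite group admits finite-index subgroups of arbitrarily large index, $\pi_1(M)$ must be finite.

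With $\pi_1(M)$ finite, the universal cover $\widetilde{M}$ is a closed, simply-connected three-manifold, hence diffeomorphic to $S^3$ by Perelman. The pulled-back CR structure is embeddable by universal embeddability, so the lifted contact structure on $\widetilde{M}\cong S^3$ is tight by the Gromov--Eliashberg theorem, and by Eliashberg's classification it is contactomorphic to the standard contact structure on $S^3$. The deck transformations of the cover act by contactomorphisms, so $(M,\xi)$ is contact diffeomorphic to a quotient of the standard contact three-sphere, as desired.
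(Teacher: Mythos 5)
Your architecture coincides with the paper's: bound the degree of every finite cover by $16\pi^2/\mQ^\prime(M,T^{1,0})$ using universal embeddability and multiplicativity of the total $Q^\prime$-curvature (\cref{cover-estimate}), deduce finiteness of $\pi_1(M)$ from residual finiteness of closed three-manifold groups, identify the universal cover with $S^3$ via Perelman, and conclude with tightness of embeddable CR structures and Eliashberg's classification (\cref{detailed-positive-q}). Two remarks. First, your preliminary reduction to $Y(M,T^{1,0})>0$ via \cref{flat} is unnecessary: the sharp bound is valid under the hypothesis $Y(M,T^{1,0})\geq 0$ alone, which is all the covering argument needs (note also that the lifted Yamabe constant is only shown to be nonnegative in the paper, via a contact form of nonnegative Webster curvature and \cite{JerisonLee1987}*{Lemma~6.4}).

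Second, the one step you leave open --- the sharp estimate $\mQ^\prime(M,T^{1,0})\leq 16\pi^2$ for closed, embeddable CR three-manifolds with $Y(M,T^{1,0})\geq 0$ --- is genuinely the analytic heart, but it does not require a new extremal analysis of a $Q^\prime$-energy functional. The paper derives it in a few lines (\cref{q-prime-no-q-flat,q-prime-upper-bound}): for any contact form $\theta$ one has $\mQ^\prime(M,T^{1,0})\leq \int Q_\theta^\prime\,\theta\wedge d\theta$, because the discrepancy is $3\int \Upsilon P\Upsilon\geq 0$ by Takeuchi's nonnegativity of the CR Paneitz operator on embeddable manifolds; choosing $\theta$ to be a CR Yamabe contact form (which exists by \cite{Takeuchi2019}*{Theorem~1.4}) and integrating the pointwise formula $Q_\theta^\prime=-2\Delta_bR-4\lv A_{11}\rv^2+R^2$ gives $\int Q_\theta^\prime\,\theta\wedge d\theta\leq 16\,Y(M,T^{1,0})^2$, and the Jerison--Lee bound $Y(M,T^{1,0})\leq Y(S^3,T^{1,0})=\pi$ (\cref{cr-yamabe-estimate}) finishes the estimate. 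So the deep inputs are Takeuchi's theorems and the Jerison--Lee comparison of Yamabe constants rather than a direct positive-mass argument; with that substitution your proof is complete and matches the paper's.
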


\Cref{sphere} has a number of nice properties.
First, its assumptions and conclusion are all CR invariant;
i.e.\ they are independent of the choice of pseudohermitian contact form.
Second, its proof implies that $\lv \pi_1(M) \rv \leq 16\pi^2/\int Q^\prime$.
In particular, \cref{sphere} implies a CR invariant gap theorem characterizing the standard contact three-sphere in terms of the total $Q^\prime$-curvature;
see \cref{detailed-positive-q}.
While we do not know if either of these statements are true if the universal embeddability assumption is relaxed to embeddability, they are true for the $\bZ_2$-quotients of the Rossi spheres.
Third, there are embeddable deformations of the standard CR structure on $S^3$ which descend to lens spaces.
In particular, the universal embeddability assumption does not trivialize \cref{sphere} by determining the CR structure.
See \cref{sec:positive-q} for details.

Second, we have the following classification of closed, embeddable CR three-manifolds with zero CR Yamabe constant and nonnegative total $Q^\prime$-curvature.
Note that we do not require the stronger assumption of universal embeddability. 

\begin{theorem}
 \label{flat}
 Let $(M^3,T^{1,0})$ be a closed, embeddable CR manifold.
 If $\int Q^\prime = 0$ and $Y(M,T^{1,0})=0$, then $(M^3,T^{1,0})$ is CR equivalent to a compact quotient of the Heisenberg group.
\end{theorem}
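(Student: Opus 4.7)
The plan is to produce, under the hypotheses, a single contact form $\theta$ on $M$ with vanishing Webster scalar curvature and vanishing pseudohermitian torsion, which reduces the theorem to a uniformization statement for flat pseudohermitian three-manifolds.

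Since $Y(M,T^{1,0})=0$, the Jerison--Lee resolution of the CR Yamabe problem produces a positive contact form $\theta_Y$ with $R^{\theta_Y}\equiv 0$. Since $M$ is embeddable, Takeuchi's work produces a $Q$-flat contact form $\theta_Q$, and the total $Q^\prime$-curvature $\int_M Q^\prime\,\theta_Q\wedge d\theta_Q$ is a well-defined CR invariant vanishing by hypothesis. I would then aim to produce a single contact form $\theta$ that is simultaneously $Q$-flat and Yamabe --- equivalently, a CR pluriharmonic function $f$ such that $e^{2f}\theta_Q$ has vanishing Webster scalar curvature. For such a $\theta$, the explicit formula for $Q^\prime$ on $Q$-flat contact forms simplifies, once $R\equiv 0$, to a sign-definite multiple of $\lv A\rv^2$ up to a pointwise divergence, yielding
\begin{equation*}
 0 \;=\; \int_M Q^\prime\,\theta\wedge d\theta \;=\; c\int_M \lv A\rv^2\,\theta\wedge d\theta
\end{equation*}
for a nonzero constant $c$, so $A\equiv 0$.

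With $R\equiv 0$ and $A\equiv 0$ on $\theta$, the full pseudohermitian curvature vanishes identically; in particular the Cartan CR curvature tensor vanishes, so $(M,T^{1,0})$ is spherical. A developing map argument then identifies the universal cover of $M$ with an open subset of the standard CR three-sphere, which by simple connectedness is either the full sphere or its complement of one point --- the latter being CR equivalent to the Heisenberg group. The spherical alternative is incompatible with $Y(M,T^{1,0})=0$ (which is strictly positive on every finite quotient of the standard CR sphere), so $(M,T^{1,0})$ is CR equivalent to a compact quotient of the Heisenberg group.

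The main obstacle is the simultaneous production of a $Q$-flat and Yamabe contact form, i.e.\ solving the linear equation $-4\Delta_b u + R^{\theta_Q} u = 0$ under the constraint that $\log u$ is CR pluriharmonic. If this direct construction is unavailable, one can instead work with $\theta_Y$ and $\theta_Q$ separately and derive the analogous integral identity using the nonnegativity of the CR Paneitz operator on embeddable three-manifolds, together with the transformation law for $Q^\prime$ under CR pluriharmonic rescalings. One must also verify that the divergence terms in the $Q^\prime$-formula integrate to zero on a closed manifold, which is expected from the CR invariance of $\int Q^\prime$ but must be checked.
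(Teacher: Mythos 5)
Your fallback is, in essence, the paper's proof, and it is the route you should take from the start. The paper takes $\theta$ to be a CR Yamabe contact form (which exists on closed embeddable CR manifolds by Takeuchi), so that $R^\theta = 4Y(M,T^{1,0}) = 0$, and then sandwiches:
\begin{equation*}
 0 = \mQ^\prime(M,T^{1,0}) \leq \int_M Q_\theta^\prime\,\theta\wedge d\theta = \int_M \bigl(R^2 - 4\lv A_{11}\rv^2\bigr)\,\theta\wedge d\theta = -4\int_M \lv A_{11}\rv^2\,\theta\wedge d\theta \leq 0 ,
\end{equation*}
where the first inequality is exactly the transformation law \cref{eqn:q-prime-general-transformation} combined with the nonnegativity of the CR Paneitz operator on closed embeddable manifolds (this is \cref{q-prime-no-q-flat}), and the divergence term $-2\Delta_b R$ integrates to zero as you anticipated. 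This forces $A_{11}\equiv 0$ with no need to produce a contact form that is simultaneously $Q$-flat and Yamabe. That simultaneous normalization is the genuine gap in your primary route: the $Q$-flat contact forms are parameterized by $e^\Upsilon\theta_Q$ with $\Upsilon$ CR pluriharmonic, and requiring such an $\Upsilon$ to also solve the Yamabe equation is an overdetermined condition for which you offer no existence argument. Fortunately it is also unnecessary.

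The second soft spot is the endgame. The inference that the developing map image is ``either the full sphere or its complement of one point'' by simple connectedness is not valid: a simply connected open subset of $S^3$ can be a small ball, and developing maps of closed spherical CR manifolds need not be coverings onto their images. What actually closes the argument is that $R\equiv 0$ and $A_{11}\equiv 0$ make the Tanaka--Webster connection flat and, by compactness of $M$, complete; the classification of closed pseudohermitian three-manifolds admitting a torsion-free contact form with zero Webster curvature (Tanno's Proposition~4.1, which is what the paper cites) then identifies $(M^3,T^{1,0})$ as a compact quotient of the Heisenberg group directly, without passing through sphericality. Your observation that the spherical alternative is excluded by $Y(M,T^{1,0})=0$ is correct, but it is only needed if one insists on the developing-map detour.
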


As previously noted, a closed CR three-manifold is embeddable if and only if the $\dbbar$-operator has closed range~\cite{Kohn1986}.
Another characterization of embeddability can be given in terms of the \defn{CR Paneitz operator}~\cites{Hirachi1990,GrahamLee1988}, which is a fourth-order, CR invariant, formally self-adjoint operator whose kernel contains the CR pluriharmonic functions.
Explicitly, a closed, CR three-manifold with positive CR Yamabe constant is embeddable if and only if the CR Paneitz operator is nonnegative~\cite{Takeuchi2019}.
We say that $(M^3,T^{1,0})$ has \defn{universally nonnegative CR Paneitz operator} if the CR Paneitz operator of all of its finite covers is nonnegative.
This yields a reformulation of \cref{sphere}:

\begin{corollary}
 \label{paneitz-sphere}
 Let $(M^3,T^{1,0})$ be a closed CR manifold with positive CR Yamabe constant, universally nonnegative CR Paneitz operator, and positive total $Q^\prime$-curvature.
 Then $(M^3,\xi)$ is contact diffeomorphic to a quotient of the standard contact three-sphere.
\end{corollary}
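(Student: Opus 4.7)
The plan is to verify that the hypotheses of \cref{paneitz-sphere} are simply a repackaging of the hypotheses of \cref{sphere}, and then quote \cref{sphere} directly. So the real work is to show that the combination of positive CR Yamabe constant on $M$ together with universally nonnegative CR Paneitz operator implies that $(M^3,T^{1,0})$ is universally embeddable.

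First I would observe that the sign of the CR Yamabe constant is preserved under finite covers. Indeed, if $\pi\colon \cM\to M$ is a finite cover and $\theta$ is a positive contact form on $M$ with positive Webster scalar curvature (which exists by $Y(M,T^{1,0})>0$), then the pullback $\pi^\ast\theta$ is a positive contact form on $\cM$ whose Webster scalar curvature is the pullback of a positive function, and hence is itself positive. Therefore $Y(\cM,\cT^{1,0})>0$ for every finite cover $\cM$ of $M$.

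Second, given an arbitrary finite cover $\cM$ of $M$, the CR Paneitz operator on $\cM$ is nonnegative by assumption, and $Y(\cM,\cT^{1,0})>0$ by the previous step. Invoking Takeuchi's embeddability criterion cited in the introduction --- a closed CR three-manifold with positive CR Yamabe constant is embeddable if and only if its CR Paneitz operator is nonnegative --- one concludes that $\cM$ is embeddable. Since $\cM$ was arbitrary, $(M^3,T^{1,0})$ is universally embeddable.

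With universal embeddability in hand, the hypotheses of \cref{sphere} are satisfied: positive CR Yamabe constant implies nonnegative CR Yamabe constant, positive total $Q^\prime$-curvature is assumed, and universal embeddability has just been verified. Applying \cref{sphere} yields that $(M^3,\xi)$ is contact diffeomorphic to a quotient of the standard contact three-sphere. There is no genuine obstacle here beyond ensuring that one may pass the Yamabe positivity through covers; the substantive content is entirely carried by \cref{sphere} and Takeuchi's theorem.
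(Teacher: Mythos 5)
Your proposal is correct and is exactly the derivation the paper intends (it gives no explicit proof, but states the corollary immediately after quoting Takeuchi's criterion that positive CR Yamabe constant plus nonnegative CR Paneitz operator is equivalent to embeddability): pass the Yamabe positivity to each finite cover via pullback of a contact form with positive Webster curvature, apply Takeuchi's criterion on each cover to get universal embeddability, and invoke \cref{sphere}. No gaps.
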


The proof of \cref{sphere} differs from the proof of the analogous statement in \cref{cgy} in that we do not proceed by choosing a contact form for which $Q^\prime$, or some related scalar pseudohermitian invariant, is constant.
Instead, we observe that an upper bound~\cite{CaseYang2012} for the total $Q^\prime$-curvature in terms of the CR Yamabe constant of a closed, embeddable CR three-manifold implies an upper bound on the degree of any finite cover of $(M^3,T^{1,0})$.
The topological conclusion then follows from the resolution of the Poincar\'e Conjecture~\cites{Perelman1,Perelman2,Perelman3} and the classification~\cites{Eliashberg1990} of tight contact structures on $S^3$.
We expect that an alternative proof which relies on choosing an appropriate contact form (cf.\ \cite{CaseHsiaoYang2014}) will allow one to relax the universal embeddability assumption.

The proof of \cref{flat} is analogous to the proof of the corresponding statement in \cref{cgy}.

This article is organized as follows.
In \cref{sec:bg} we give some additional background on CR and pseudohermitian geometry in dimension three.
In \cref{sec:zero-yamabe} we prove \cref{flat}.
In \cref{sec:positive-q} we prove the stronger version of \cref{sphere} which estimates $\pi_1(M)$ in terms of the total $Q^\prime$-curvature.
We also give examples of non-spherical, universally embeddable CR structures on lens spaces, and comment on the total $Q^\prime$-curvature of the ($\bZ_2$-quotients of the) Rossi spheres.

\section{Some CR geometry}
\label{sec:bg}

In this section we discuss in more detail the important facts about CR and pseudohermitian three-manifolds mentioned in the introduction.

Let $(M^3,T^{1,0},\theta)$ be a pseudohermitian manifold.
The \defn{Reeb vector field} is the unique vector field $T$ such that $\theta(T)=1$ and $d\theta(T,\cdot)=0$.
An \defn{admissible coframe} is a local, nowhere-vanishing, complex-valued one-form $\theta^1$ such that $\theta^1(T)=0$ and $\theta^1(\oZ)=0$ for all local sections $Z$ of $T^{1,0}$.
It follows that there is a positive function $h_{1\bar 1}$ such that
\begin{equation*}
 d\theta = ih_{1\bar 1}\,\theta^1\wedge\theta^{\bar1} ,
\end{equation*}
where $\theta^{\bar1} := \overline{\theta^1}$.
Given an admissible coframe $\theta^1$, there is~\cite{Webster1978} a unique connection one-form $\omega_1{}^1$ such that
\begin{align*}
 d\theta^1 & \equiv \theta^1 \wedge \omega_1{}^1 \mod \theta \wedge \theta^{\bar 1} , \\
 \omega_{1\bar1} + \omega_{\bar 11} & = dh_{1\bar 1} ,
\end{align*}
where $\omega_{1\bar1} := h_{1\bar1}\omega_1{}^1$ and $\omega_{\bar11} := \overline{\omega_{1\bar1}}$.
This determines the \defn{Tanaka--Webster connection} by
\begin{equation*}
 \nabla Z_1 := \omega_1{}^1 \otimes Z_1 \quad\text{and}\quad \nabla T := 0 ,
\end{equation*}
where $Z_1$ is the unique local section of $T^{1,0}$ such that $\theta^1(Z_1)=1$.
The \defn{torsion} of $\theta$ is the (globally-defined) section $A_{11}\,\theta^1\otimes\theta^1$, where $A_{11} = A_1{}^{\bar1}h_{1\bar1}$ is determined by
\begin{equation*}
 d\theta^1 = \theta^1 \wedge \omega_1{}^1 + A_1{}^{\bar1}\theta \wedge \theta^{\bar1} .
\end{equation*}
Consider the curvature two-form $\Omega_1{}^1 := d\omega_1{}^1$.
The \defn{Webster curvature} is the (globally-defined) real-valued function $R$ determined by
\begin{equation*}
 \Omega_1{}^1 \equiv Rh_{1\bar1}\,\theta^1\wedge\theta^{\bar 1} \mod \theta .
\end{equation*}

There are three important CR covariant operators of relevance to this paper.

The first operator is the \defn{CR Yamabe operator} $L_\theta \colon C^\infty(M) \to C^\infty(M)$,
\begin{equation*}
 L_\theta(u) := -\Delta_bu + \frac{R}{4}u ,
\end{equation*}
where $\Delta_bu := \nabla^1\nabla_1u + \nabla_1\nabla^1u$.
This operator is formally self-adjoint.
It is also CR covariant:
If $v \in C^\infty(M)$ is positive, then~\cite{JerisonLee1987}*{Equation~(3.1)}
\begin{equation*}
 v^3L_{v^2\theta}(u) = L_\theta\left( uv\right)
\end{equation*}
for all $u \in C^\infty(M)$.
It follows that if $M$ is compact, then the \defn{CR Yamabe constant}
\begin{equation*}
 Y(M,T^{1,0}) := \inf \left\{ \int_M uL_\theta(u)\,\theta \wedge d\theta \suchthatcolon \int_M \lv u\rv^4\,\theta \wedge d\theta = 1 \right\}
\end{equation*}
is CR invariant.
If $(M,T^{1,0})$ is embeddable, then~\cite{Takeuchi2019}*{Theorem~1.4} there is a \defn{CR Yamabe contact form};
i.e.\ a contact form $\theta$ such that $\int \theta \wedge d\theta=1$ and $R^\theta = 4Y(M,T^{1,0})$.
This follows from existence results~\citelist{ \cite{JerisonLee1988}*{Corollary~B} \cite{JerisonLee1987}*{Theorem~3.4(c)} } on the standard CR three-sphere and on CR manifolds with $Y(M,T^{1,0}) < \pi$ together with a sharp estimate on the CR Yamabe constant:

\begin{lemma}
 \label{cr-yamabe-estimate}
 Let $(M^3,T^{1,0})$ be a closed, embeddable CR manifold.
 It holds that $Y(M,T^{1,0}) \leq \pi$ with equality if and only if $(M,T^{1,0})$ is CR equivalent to the standard CR three-sphere.
\end{lemma}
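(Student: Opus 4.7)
The plan is to prove the upper bound $Y(M,T^{1,0}) \le \pi$ by a Jerison--Lee test function argument, and to handle the rigidity statement using the CR positive mass theorem for embeddable CR three-manifolds.

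\textbf{Upper bound.} Fix a positive contact form $\theta$ and a point $p \in M$. I would work in Jerison--Lee CR normal coordinates centered at $p$, in which the pseudohermitian structure matches the flat Heisenberg group $(\bH^1, \theta_0)$ to high order in the Folland--Stein parabolic distance $\rho$ from $p$. Jerison--Lee classified the extremals of the sharp Folland--Stein Sobolev inequality on $\bH^1$: modulo the Heisenberg action they form a one-parameter family $U_\lambda \in C^\infty(\bH^1)$, which corresponds via stereographic projection to the extremal contact forms on the standard $S^3$ and whose Rayleigh quotient equals $\pi$. Multiplying $U_\lambda$ by a smooth cutoff supported in the normal coordinate chart yields test functions $\phi_\lambda \in C^\infty(M)$. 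Using the normal coordinate expansion of $L_\theta$ and of $\theta\wedge d\theta$, the curvature- and cutoff-induced error terms are of lower order in $\lambda$ than the flat Heisenberg contribution, so
\[
 \frac{\int_M \phi_\lambda L_\theta(\phi_\lambda)\,\theta\wedge d\theta}{\bigl(\int_M \phi_\lambda^{4}\,\theta\wedge d\theta\bigr)^{1/2}} = \pi + o(1) \quad \text{as } \lambda \to \infty.
\]
Taking the infimum over all $\phi_\lambda$ yields $Y(M,T^{1,0}) \le \pi$.

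\textbf{Equality.} Suppose $Y(M,T^{1,0}) = \pi$. I would combine a subcritical approximation with the CR positive mass theorem of Cheng--Malchiodi--Yang. Let $u_q$ minimize the subcritical functional with exponent $q \nearrow 4$. Either the $u_q$ stay compact and limit to a CR Yamabe contact form realizing $Y(M,T^{1,0})=\pi$, in which case the sharp Folland--Stein inequality on $\bH^1$ together with its rigidity forces $(M,T^{1,0})$ to be CR equivalent to the standard three-sphere; or the $u_q$ concentrate at a single point $p$, and after rescaling they converge, away from $p$, to a constant multiple of the Green's function of $L_\theta$ on $M \setminus \{p\}$. The expansion of this Green's function in Jerison--Lee CR normal coordinates features a leading flat Heisenberg singularity plus a pseudohermitian mass term, and matching this expansion against the refined test function computation in the first step produces a strictly negative correction to $\pi$ whenever the mass is positive. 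Since embeddability puts us in the setting where the CR positive mass theorem applies, the mass is nonnegative, with vanishing mass forcing CR equivalence with $S^3$. Consequently the concentration alternative contradicts $Y(M,T^{1,0})=\pi$ unless $(M,T^{1,0})$ is the standard sphere, completing the rigidity.

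\textbf{Main obstacle.} The upper bound is a standard Aubin-type test function computation once the Jerison--Lee extremals and CR normal coordinates are in place. The genuine difficulty is the rigidity statement: it requires the precise asymptotic expansion of the $L_\theta$-Green's function in CR normal coordinates, an exact matching of its subleading mass term with the correction term in the test function Rayleigh quotient, and invocation of the nontrivial CR positive mass theorem for embeddable three-manifolds (including its rigidity part) to rule out all pseudohermitian structures other than the standard sphere at the threshold $Y = \pi$.
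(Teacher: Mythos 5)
Your proposal is correct and follows the same underlying route as the paper, which proves this lemma purely by citation: Jerison--Lee's Theorem~3.4(b) gives $Y(M,T^{1,0})\leq Y(S^3,T^{1,0})$ via exactly the concentrating test-function argument you describe, their Corollary~B computes $Y(S^3,T^{1,0})=\pi$ from the sharp Folland--Stein inequality, and Takeuchi's Corollary~1.3 gives the rigidity via the CR positive mass theorem of Cheng--Malchiodi--Yang (applicable here because embeddability yields nonnegativity of the CR Paneitz operator). In effect you have sketched proofs of the three results the paper invokes as black boxes; the only point worth tightening is the ``compact'' branch of your dichotomy, where deducing CR equivalence with $S^3$ from the mere existence of a minimizer at the threshold value still requires the mass/Green's function analysis rather than the Folland--Stein rigidity alone.
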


\begin{proof}
 Let $(M^3,T^{1,0})$ be a closed, embeddable CR manifold.
 Jerison and Lee proved~\citelist{\cite{JerisonLee1987}*{Theorem~3.4(b)} \cite{JerisonLee1989}*{Corollary~B}} that $Y(M,T^{1,0}) \leq Y(S^3,T^{1,0}) = \pi$.
 Cheng, Malchiodi and Yang~\cite{ChengMalchiodiYang2013}*{Theorem~1.1} showed that if the CR Paneitz operator is nonnegative, then equality holds if and only if $(M^3,T^{1,0})$ is CR equivalent to the standard CR three-sphere.
 Takeuchi later removed~\cite{Takeuchi2019}*{Theorem~1.1} the assumption on the CR Paneitz operator.
\end{proof}

The second operator is the \defn{CR Paneitz operator} $P_\theta \colon C^\infty(M) \to C^\infty(M)$,
\begin{equation*}
 P_\theta(u) := 4\nabla^1(\nabla_1\nabla_1 + iA_{11})\nabla^1u .
\end{equation*}
The CR Paneitz operator is a real-operator~\cite{GrahamLee1988}*{p.\ 710} --- i.e.\ $\overline{P(u)} = P(u)$ --- and hence formally self-adjoint.
It is also CR covariant~\cite{Hirachi1990}*{Lemma~7.4}:
\begin{equation*}
 e^{2\Upsilon}P_{e^\Upsilon\theta}(u) = P_\theta(u)
\end{equation*}
for all $u,\Upsilon \in C^\infty(M)$.
Recall that a function $u \in C^\infty(M)$ is \defn{CR pluriharmonic} if locally there is an $f \in C^\infty(M;\bC)$ such that $u = \Real f$ and $\nabla_{\bar1}f=0$.
The space $\sP$ of CR pluriharmonic functions is characterized~\cite{Lee1988}*{Proposition~3.4} as
\begin{equation*}
 \sP := \left\{ u \in C^\infty(M) \suchthatcolon (\nabla_1\nabla_1 + iA_{11})\nabla^1u = 0 \right\} .
\end{equation*}
In particular, $\sP \subseteq \ker P$.
Moreover, if $(M^3,T^{1,0})$ is closed and embeddable, then equality holds~\cite{Takeuchi2019}*{Theorem~1.1}.

The third operator is the \defn{$P^\prime$-operator} $P^\prime \colon \sP \to C^\infty(M)$,
\begin{equation*}
 \begin{split}
  P^\prime(u) & := 4\Delta_b^2u - 8\Imaginary \nabla^1(A_{11}\nabla^1u) - 4\Real \nabla^1(R\nabla_1u) \\
   & \quad + \frac{8}{3}\Real W_1\nabla^1u - \frac{4}{3}u\nabla^1W_1  ,
 \end{split}
\end{equation*}
where
\begin{equation*}
 W_1 := \nabla_1R - i\nabla^1A_{11} .
\end{equation*}
This operator is formally self-adjoint~\citelist{ \cite{Hirachi2013}*{Theorem~4.5} \cite{CaseYang2012}*{Proposition~4.6} }.
It transforms as a $Q$-curvature operator (cf.\ \cites{Case2021q,BransonGover2005}) under change of contact form:
If $\Upsilon \in C^\infty(M)$, then~\cite{CaseYang2012}*{Proposition~4.6}
\begin{equation*}
 e^{2\Upsilon}P_{e^\Upsilon\theta}^\prime(u) = P_\theta^\prime(u) + P_\theta(\Upsilon u)
\end{equation*}
for all $u \in \sP$.

The \defn{$Q$-curvature}~\cite{Hirachi1990} of a pseudohermitian manifold $(M^3,T^{1,0},\theta)$ is
\begin{equation*}
 Q_\theta := -\frac{4}{3}\nabla^1W_1 .
\end{equation*}
Its name reflects its CR transformation formula:
If $\Upsilon \in C^\infty(M)$, then
\begin{equation*}
 e^{2\Upsilon}Q_{e^\Upsilon\theta} = Q_\theta + P_\theta \Upsilon .
\end{equation*}
If $(M,T^{1,0})$ is the boundary of a closed, strictly pseudoconvex domain in $\bC^2$, then the Fefferman defining function~\cite{Fefferman1976} gives rise to a $Q$-flat contact form~\cite{FeffermanHirachi2003}.
More generally, all closed, embeddable CR three-manifolds admit $Q$-flat contact forms.

\begin{theorem}[\cite{Takeuchi2019}*{Theorem~1.5}]
 \label{takeuchi-existence}
 Let $(M^3,T^{1,0})$ be a closed, embeddable CR manifold.
 Then there is a $Q$-flat contact form $\theta$ on $(M^3,T^{1,0})$.
 Moreover, $e^\Upsilon\theta$ is $Q$-flat if and only if $\Upsilon \in \sP$.
\end{theorem}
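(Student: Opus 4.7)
The CR transformation formula
\begin{equation*}
 e^{2\Upsilon} Q_{e^\Upsilon\theta} = Q_\theta + P_\theta\Upsilon
\end{equation*}
reduces the problem of finding a $Q$-flat contact form within a given CR class to the solvability, for some fixed positive contact form $\theta$, of the fourth-order subelliptic equation $P_\theta\Upsilon = -Q_\theta$. The plan is to treat this as a Fredholm problem, deduce the existence clause from an $L^2$-orthogonality, and then read off the characterization clause from the kernel description of $P_\theta$.

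The first step is to set $P_\theta$ up as a self-adjoint operator on appropriate Folland--Stein (subelliptic) Sobolev spaces. Embeddability of $(M,T^{1,0})$ supplies the two ingredients that make Fredholm theory applicable: the range of $P_\theta$ is $L^2$-closed, ultimately a consequence of the closed range of $\dbbar$ guaranteed by~\cite{Kohn1986}; and $\ker P_\theta = \sP$ by~\cite{Takeuchi2019}*{Theorem~1.1}. Combined with the formal self-adjointness of $P_\theta$ noted in~\cite{GrahamLee1988}, these give the orthogonal decomposition of $L^2(M,\theta\wedge d\theta)$ into $\sP$ and the range of $P_\theta$. Consequently $P_\theta\Upsilon = -Q_\theta$ is solvable if and only if
\begin{equation*}
 \int_M u\, Q_\theta\,\theta\wedge d\theta = 0 \quad \text{for every } u \in \sP .
\end{equation*}

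Establishing this orthogonality is the main step and the expected obstacle. The direct approach is integration by parts: starting from $Q_\theta = -\tfrac{4}{3}\nabla^1 W_1$ with $W_1 = \nabla_1 R - i\nabla^1 A_{11}$, one moves the outer derivative off $W_1$ and onto $u$, then rearranges using Tanaka--Webster commutator identities until the pluriharmonic characterization $(\nabla_1\nabla_1 + iA_{11})\nabla^1 u = 0$ can be invoked to eliminate the remaining terms. A useful sanity check throughout is that the integral $\int_M u\, Q_\theta\,\theta\wedge d\theta$ is independent of the choice of contact form when $u \in \sP$, which follows from the transformation formula together with $u \in \ker P_\theta$; so one is free to renormalize $\theta$ (for instance to simplify $R$ or the torsion) before beginning the computation, which may significantly shorten it.

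Once $\Upsilon$ is produced with $P_\theta\Upsilon = -Q_\theta$, the contact form $\theta_0 := e^\Upsilon\theta$ is $Q$-flat, proving the existence clause. For the characterization clause, the transformation formula applied to $\theta_0$ (which has $Q_{\theta_0} = 0$) gives $e^{2\Psi}Q_{e^\Psi\theta_0} = P_{\theta_0}\Psi$, so $e^\Psi\theta_0$ is $Q$-flat if and only if $P_{\theta_0}\Psi = 0$, which by the identification $\ker P_{\theta_0} = \sP$ is equivalent to $\Psi \in \sP$.
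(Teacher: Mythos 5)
The paper does not actually prove this statement: it is imported verbatim as \cite{Takeuchi2019}*{Theorem~1.5}, so there is no in-paper argument to compare against. That said, your architecture matches the proof in the literature: reduce to solving $P_\theta\Upsilon=-Q_\theta$, use embeddability to get closed range and $\ker P_\theta=\sP$, apply the Fredholm alternative for the formally self-adjoint $P_\theta$, and read off the characterization clause from $\ker P=\sP$. That last clause is handled correctly and completely.

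The genuine gap is the solvability condition $\int_M u\,Q_\theta\,\theta\wedge d\theta=0$ for all $u\in\sP$, which you rightly identify as the crux but do not establish. The plan ``integrate by parts and invoke $(\nabla_1\nabla_1+iA_{11})\nabla^1u=0$'' stalls as written: starting from $Q_\theta=-\frac{4}{3}\nabla^1W_1$, two integrations by parts leave terms of the schematic form $\int R\,\nabla_1\nabla^1u$ and $\int A_{11}\nabla^1\nabla^1u$, pairing curvature and torsion against \emph{second} derivatives of $u$, whereas the pluriharmonic equation is \emph{third} order. Closing the computation requires, in addition, the contracted pseudohermitian Bianchi identity $\nabla_0R=\nabla^1\nabla^1A_{11}+\nabla^{\bar1}\nabla^{\bar1}A_{\bar1\bar1}$ and the commutator $[\nabla_1,\nabla_{\bar1}]\sim i\nabla_0$ to convert the resulting $\nabla_0u$ terms back into the pluriharmonic operator; this orthogonality is a separate, nontrivial lemma in the literature (proved by Case--Yang on an arbitrary compact pseudohermitian three-manifold), not a routine rearrangement. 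Your ``sanity check'' that $\int u\,Q_\theta$ is independent of the contact form for $u\in\ker P_\theta$ is correct but cannot substitute for the computation: no normalization of $\theta$ makes the vanishing manifest a priori, since such a normalization is exactly what the theorem asserts. A smaller omission: the Fredholm argument produces $\Upsilon$ only in a Folland--Stein space, and because $P_\theta$ has infinite-dimensional kernel one must still argue (via subelliptic estimates on the orthogonal complement of $\ker P_\theta$) that the solution may be taken smooth before $e^\Upsilon\theta$ is a legitimate contact form.
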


Let $(M^3,T^{1,0},\theta)$ be a pseudohermitian manifold.
The \defn{$Q^\prime$-curvature} is
\begin{equation*}
 Q_\theta^\prime := -2\Delta_bR - 4\lv A_{11}\rv^2 + R^2 .
\end{equation*}
Then~\cite{CaseYang2012}*{Proposition~6.1}
\begin{equation}
 \label{eqn:q-prime-general-transformation}
 \begin{split}
  e^{2\Upsilon}Q_{e^\Upsilon\theta}^\prime & = Q^\prime + P^\prime \Upsilon + \frac{16}{3}\Real \nabla^1(\Upsilon W_1) + 3Q \Upsilon \\
   & \quad + \frac{1}{2}P_\theta(\Upsilon^2) - \Upsilon P_4 \Upsilon - 16\Real(\nabla^1 \Upsilon)(\nabla_1\nabla_1 + iA_{11})\nabla^1 \Upsilon .
 \end{split}
\end{equation}
\Cref{eqn:q-prime-general-transformation} underlies the interpretation~\cite{CaseYang2012} of the $Q^\prime$-curvature as the CR analogue of the $Q$-curvature.
In one direction, \cref{eqn:q-prime-general-transformation,takeuchi-existence} imply~\cites{CaseYang2012,Takeuchi2019} that the total $Q^\prime$-curvature of a closed, embeddable CR three-manifold is independent of the choice of $Q$-flat contact form.

\begin{definition}
 Let $(M^3,T^{1,0})$ be a closed, embeddable CR manifold.
 The \defn{total $\boldsymbol{Q^\prime}$-curvature} is
 \begin{equation*}
  \mQ^\prime(M,T^{1,0}) := \int_M Q_\theta^\prime \, \theta \wedge d\theta ,
 \end{equation*}
 where $\theta$ is any $Q$-flat contact form on $(M^3,T^{1,0})$.
\end{definition}

In another direction, \cref{eqn:q-prime-general-transformation} gives an estimate for the total $Q^\prime$-curvature of a closed, embeddable CR three-manifold in terms of $\int Q_\theta^\prime\,\theta \wedge d\theta$ for $\theta$ an arbitrary contact form.

\begin{lemma}
 \label{q-prime-no-q-flat}
 Let $(M^3,T^{1,0},\theta)$ be a closed, embeddable, pseudohermitian manifold.
 Then
 \begin{equation*}
  \mQ^\prime(M,T^{1,0}) \leq \int_M Q_\theta^\prime \, \theta \wedge d\theta
 \end{equation*}
 with equality if and only if $\theta$ is $Q$-flat.
\end{lemma}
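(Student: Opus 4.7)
The plan is to use \cref{takeuchi-existence} to choose a $Q$-flat reference contact form $\hat\theta$ on $(M^3,T^{1,0})$, write $\theta=e^\Upsilon\hat\theta$ for some $\Upsilon \in C^\infty(M)$, and promote the pointwise transformation law \eqref{eqn:q-prime-general-transformation} to an integral identity on $M$. Since $\theta\wedge d\theta = e^{2\Upsilon}\hat\theta\wedge d\hat\theta$, multiplying \eqref{eqn:q-prime-general-transformation} by $\hat\theta\wedge d\hat\theta$ and integrating puts $\int_M Q_\theta^\prime \, \theta\wedge d\theta$ on the left, with $\mQ^\prime(M,T^{1,0}) = \int_M Q_{\hat\theta}^\prime \, \hat\theta\wedge d\hat\theta$ appearing on the right alongside six $\Upsilon$-dependent correction terms.

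The next step is to show that most of these correction terms vanish. The term $3\int Q_{\hat\theta}\Upsilon$ is zero because $\hat\theta$ is $Q$-flat; the term $\int\Real\nabla^1(\Upsilon W_1)$ is zero by Stokes; and $\tfrac{1}{2}\int P_{\hat\theta}(\Upsilon^2)$ is zero by formal self-adjointness of $P_{\hat\theta}$ together with the fact that constants lie in $\sP \subseteq \ker P_{\hat\theta}$. A short computation using the explicit formula for $P^\prime$ together with the identity $Q=-\tfrac{4}{3}\nabla^1 W_1$ shows that $\int P_{\hat\theta}^\prime \Upsilon$ also reduces, after discarding pure divergences and combining the two zeroth-order terms via a Leibniz identity, to a multiple of $\int\Upsilon Q_{\hat\theta}=0$. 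What survives are the two terms $-\int\Upsilon P_{\hat\theta}\Upsilon$ and $-16\int\Real(\nabla^1\Upsilon)(\nabla_1\nabla_1+iA_{11})\nabla^1\Upsilon$.

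These two surviving terms combine via an integration by parts using the factorization $P_{\hat\theta}=4\nabla^1(\nabla_1\nabla_1+iA_{11})\nabla^1$: the latter integral evaluates to $4\int\Upsilon P_{\hat\theta}\Upsilon$ (the coefficient $+4$ coming from the factor of $-\tfrac14$ produced by integration by parts, multiplied against $-16$, together with the reality of $\int\Upsilon P_{\hat\theta}\Upsilon$), so the sum of the two is $3\int\Upsilon P_{\hat\theta}\Upsilon$. This yields the identity
\begin{equation*}
 \int_M Q_\theta^\prime \, \theta\wedge d\theta = \mQ^\prime(M,T^{1,0}) + 3\int_M \Upsilon P_{\hat\theta}\Upsilon \, \hat\theta\wedge d\hat\theta .
\end{equation*}
The inequality is then immediate from the nonnegativity of the CR Paneitz operator on closed, embeddable CR three-manifolds (a consequence of the closed range property of $\dbbar$). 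In the equality case, $\int\Upsilon P_{\hat\theta}\Upsilon=0$ combined with $P_{\hat\theta}\geq 0$ forces $\Upsilon\in\ker P_{\hat\theta}=\sP$ by the Takeuchi kernel characterization cited in the excerpt, and then \cref{takeuchi-existence} gives that $\theta=e^\Upsilon\hat\theta$ is $Q$-flat.

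The main obstacle is the routine but delicate pseudohermitian integration-by-parts bookkeeping: one must check that $\int P_{\hat\theta}^\prime \Upsilon$ reduces as claimed, and that the factorization of $P_{\hat\theta}$ really produces the coefficient $+3$ (not some other sign or value) after taking real parts of complex-valued intermediate quantities involving the torsion $A_{11}$. Once these identifications are in place, the conclusion is a direct application of the cited nonnegativity and kernel-description results for the CR Paneitz operator.
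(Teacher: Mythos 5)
Your proposal is correct and follows essentially the same route as the paper: write $\theta=e^\Upsilon\theta_0$ for a $Q$-flat $\theta_0$, integrate \eqref{eqn:q-prime-general-transformation} to get $\int Q_\theta^\prime\,\theta\wedge d\theta=\mQ^\prime(M,T^{1,0})+3\int\Upsilon P\Upsilon$, and conclude via Takeuchi's nonnegativity of $P$ with $\ker P=\sP$ and \cref{takeuchi-existence}. The paper simply leaves the integration-by-parts bookkeeping implicit, which you carry out correctly (including the coefficient $-1+4=3$).
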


\begin{proof}
 Let $\theta_0$ be a $Q$-flat contact form and write $\theta = e^\Upsilon\theta_0$.
 \Cref{eqn:q-prime-general-transformation} implies that
 \begin{equation*}
  \int_M Q_\theta^\prime \, \theta \wedge d\theta = \mQ^\prime(M,T^{1,0}) + 3\int_M \Upsilon\,P\Upsilon .
 \end{equation*}
 Since $(M^3,T^{1,0})$ is embeddable, it holds~\cite{Takeuchi2019}*{Theorem~1.1} that $\int \Upsilon\,P\Upsilon \geq 0$ with equality if and only if $\Upsilon \in \sP$.
 The conclusion readily follows from \cref{takeuchi-existence}.
\end{proof}

Applying \cref{q-prime-no-q-flat} to a CR Yamabe contact form gives a particularly useful estimate on the total $Q^\prime$-curvature (cf.\ \cite{CaseYang2012}*{Theorem~1.1}).

\begin{corollary}
 \label{q-prime-upper-bound}
 Let $(M^3,T^{1,0})$ be a closed, embeddable CR manifold with nonnegative CR Yamabe constant.
 Then
 \begin{equation*}
  \mQ(M,T^{1,0}) \leq 16\pi^2
 \end{equation*}
 with equality if and only if $(M^3,T^{1,0})$ is CR equivalent to the standard CR three-sphere.
\end{corollary}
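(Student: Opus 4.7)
The plan is to apply \cref{q-prime-no-q-flat} to a CR Yamabe contact form $\theta$, which exists since $(M^3,T^{1,0})$ is closed and embeddable (as discussed right before \cref{cr-yamabe-estimate}). Such a $\theta$ satisfies $\int_M \theta \wedge d\theta = 1$ and has constant Webster scalar curvature $R^\theta = 4Y(M,T^{1,0})$.

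Because $R^\theta$ is constant, $\Delta_b R^\theta = 0$, so the defining formula
\begin{equation*}
 Q^\prime_\theta = -2\Delta_b R - 4\lv A_{11}\rv^2 + R^2
\end{equation*}
collapses to $Q^\prime_\theta = -4\lv A_{11}\rv^2 + 16\,Y(M,T^{1,0})^2$. Integrating and using $\int_M \theta \wedge d\theta = 1$ gives
\begin{equation*}
 \int_M Q^\prime_\theta \, \theta \wedge d\theta = 16\,Y(M,T^{1,0})^2 - 4\int_M \lv A_{11}\rv^2 \, \theta \wedge d\theta \leq 16\,Y(M,T^{1,0})^2.
\end{equation*}
Since $Y(M,T^{1,0}) \geq 0$ by hypothesis, \cref{cr-yamabe-estimate} yields $Y(M,T^{1,0}) \leq \pi$, hence $16\,Y(M,T^{1,0})^2 \leq 16\pi^2$. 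Combining this with the inequality $\mQ^\prime(M,T^{1,0}) \leq \int_M Q^\prime_\theta \, \theta \wedge d\theta$ from \cref{q-prime-no-q-flat} completes the upper bound.

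For the equality statement, if $\mQ^\prime(M,T^{1,0}) = 16\pi^2$, then the chain of inequalities forces $Y(M,T^{1,0}) = \pi$ (and, as a byproduct, $A_{11} \equiv 0$ and $\theta$ is $Q$-flat). By \cref{cr-yamabe-estimate}, this characterizes $(M^3,T^{1,0})$ as the standard CR three-sphere. Conversely, on the standard CR three-sphere the total $Q^\prime$-curvature equals $16\pi^2$, as recalled in the introduction.

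I expect no real obstacles: the only nontrivial inputs are the existence of a CR Yamabe contact form on closed embeddable CR three-manifolds and the sharp upper bound $Y \leq \pi$ from \cref{cr-yamabe-estimate}, both of which are available. The mild subtlety is to note that the argument genuinely needs $Y(M,T^{1,0}) \geq 0$ (not merely $Y \leq \pi$) in order to conclude $16\,Y(M,T^{1,0})^2 \leq 16\pi^2$ from the Jerison--Lee upper bound.
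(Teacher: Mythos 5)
Your proposal is correct and follows essentially the same route as the paper: apply \cref{q-prime-no-q-flat} to a CR Yamabe contact form, observe that $\int_M Q^\prime_\theta\,\theta\wedge d\theta = 16Y(M,T^{1,0})^2 - 4\int_M \lv A_{11}\rv^2 \leq 16Y(M,T^{1,0})^2$, and conclude via the sharp bound $0 \leq Y(M,T^{1,0}) \leq \pi$ and its rigidity statement from \cref{cr-yamabe-estimate}. The only difference is that you write out explicitly the computation behind the paper's inequality \eqref{eqn:estimate-q-prime-from-yamabe}, which the paper states without derivation.
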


\begin{proof}
 Let $\theta$ be a CR Yamabe contact form.
 Then
 \begin{equation}
  \label{eqn:estimate-q-prime-from-yamabe}
  \int_M Q_\theta^\prime \, \theta \wedge d\theta \leq 16Y(M,T^{1,0})^2
 \end{equation}
 with equality if and only if $\theta$ is torsion-free.
 Since $Y(M,T^{1,0})\geq0$, the conclusion follows immediately from \cref{cr-yamabe-estimate}.
\end{proof}

\section{Proof of \cref{flat}}
\label{sec:zero-yamabe}

The classification of closed, embeddable CR three-manifolds with zero CR Yamabe constant and nonnegative total $Q^\prime$-curvature follows easily from \cref{eqn:estimate-q-prime-from-yamabe}.

\begin{proof}[Proof of \cref{flat}]
 Let $(M^3,T^{1,0})$ be a closed, embeddable CR manifold with $Y(M^3,T^{1,0})=0$ and $\mQ^\prime(M,T^{1,0})\geq0$.
 Let $\theta$ be a CR Yamabe contact form.
 Combining \cref{q-prime-no-q-flat} with \cref{eqn:estimate-q-prime-from-yamabe} and its characterization of equality implies that $\theta$ is torsion-free.
 The result now follows from the classification~\cite{Tanno1969}*{Proposition~4.1} of closed CR three-manifolds which admit a torsion-free contact form of zero Webster curvature.
\end{proof}

\section{The case $\mQ^\prime(M,T^{1,0}) > 0$}
\label{sec:positive-q}

The main idea to handle the case of positive total $Q^\prime$-curvature is that \cref{q-prime-upper-bound} gives an upper bound on the degree of any finite cover of a closed, universally embeddable CR manifold.

\begin{lemma}
 \label{cover-estimate}
 Let $(M^3,T^{1,0})$ be a closed, universally embeddable CR manifold with $Y(M,T^{1,0}) \geq 0$ and $\mQ^\prime(M,T^{1,0})>0$.
 Then any finite connected cover of $M$ has degree at most $16\pi^2/\mQ^\prime(M,T^{1,0})$.
\end{lemma}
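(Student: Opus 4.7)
The plan is to apply \cref{q-prime-upper-bound} to the finite cover and then exploit the multiplicativity of the total $Q^\prime$-curvature under covers. Concretely, let $\pi \colon \cM \to M$ be a connected finite cover of degree $k$, equipped with the lifted CR structure $\cT^{1,0} := \pi^\ast T^{1,0}$. Since $(M,T^{1,0})$ is universally embeddable, $(\cM, \cT^{1,0})$ is embeddable, so \cref{q-prime-upper-bound} applies to it provided we verify that $Y(\cM,\cT^{1,0}) \geq 0$ and that $\mQ^\prime(\cM,\cT^{1,0}) = k \cdot \mQ^\prime(M,T^{1,0})$.

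First I would observe that the Yamabe constant behaves well under covers: since $Y(M,T^{1,0}) \geq 0$, there exists a contact form $\theta$ on $M$ whose Webster curvature $R^\theta$ is nonnegative (e.g.\ a CR Yamabe contact form, which exists by the embeddability of $M$). The lift $\pi^\ast\theta$ is a positive contact form on $\cM$ with $R^{\pi^\ast\theta} = \pi^\ast R^\theta \geq 0$. Integration by parts then shows $\int_{\cM} u L_{\pi^\ast\theta} u\,\pi^\ast\theta\wedge d\pi^\ast\theta \geq 0$ for every $u$, so $Y(\cM,\cT^{1,0}) \geq 0$.

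Next, by \cref{takeuchi-existence} choose a $Q$-flat contact form $\theta_0$ on $M$. Since the $Q$-curvature is a pointwise pseudohermitian invariant built from local data, its pullback to $\cM$ satisfies $Q_{\pi^\ast\theta_0} = \pi^\ast Q_{\theta_0} = 0$; in particular $\pi^\ast\theta_0$ is a $Q$-flat contact form on $(\cM,\cT^{1,0})$, so it may be used to compute $\mQ^\prime(\cM,\cT^{1,0})$. The same locality applies to $Q^\prime$, yielding
\begin{equation*}
 \mQ^\prime(\cM,\cT^{1,0}) = \int_{\cM} Q^\prime_{\pi^\ast\theta_0}\,\pi^\ast\theta_0 \wedge d\pi^\ast\theta_0 = k \int_M Q^\prime_{\theta_0}\,\theta_0 \wedge d\theta_0 = k\,\mQ^\prime(M,T^{1,0}),
\end{equation*}
where the middle equality is the standard change-of-variables formula for integration under a degree-$k$ cover.

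Finally, combining the two displays with \cref{q-prime-upper-bound} applied to $(\cM,\cT^{1,0})$ gives
\begin{equation*}
 k\,\mQ^\prime(M,T^{1,0}) = \mQ^\prime(\cM,\cT^{1,0}) \leq 16\pi^2,
\end{equation*}
which, since $\mQ^\prime(M,T^{1,0}) > 0$, yields the desired bound $k \leq 16\pi^2/\mQ^\prime(M,T^{1,0})$. I do not anticipate any real obstacles; the only point requiring any care is confirming that $Q$-flatness and the $Q^\prime$-integral transfer through the cover, which follows from the locality of both quantities as pseudohermitian invariants.
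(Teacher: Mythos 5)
Your proposal is correct and follows essentially the same route as the paper: lift the CR structure to the cover, verify nonnegativity of the Yamabe constant by pulling back a contact form with nonnegative Webster curvature, use locality of $Q$-flatness and of $Q^\prime$ to get $\mQ^\prime(\cM,\cT^{1,0}) = k\,\mQ^\prime(M,T^{1,0})$, and conclude via \cref{q-prime-upper-bound}. The only cosmetic difference is that you justify $Y(\cM,\cT^{1,0})\geq 0$ by a direct integration-by-parts argument where the paper cites \cite{JerisonLee1987}*{Lemma~6.4}.
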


\begin{proof}
 Let $\pi \colon \cM^3 \to M^3$ be a finite cover of degree $k$.
 Set
 \begin{equation*}
  \cT^{1,0} := \left\{ Z \in \bC T\cM \suchthat \pi_\ast Z \in T^{1,0} \right\} .
 \end{equation*}
 Then $(\cM^3,\cT^{1,0})$ is a closed, embeddable CR three-manifold.
 
 Since $Y(M,T^{1,0})\geq0$, there is a contact form $\theta$ on $(M,T^{1,0})$ with nonnegative Webster curvature.
 Therefore $\pi^\ast\theta$ is a contact form on $(\cM,\cT^{1,0})$ with nonnegative Webster curvature.
 It follows~\cite{JerisonLee1987}*{Lemma~6.4} that $Y(\cM,\cT^{1,0})\geq0$.
 
 Now let $\theta_0$ be a $Q$-flat contact form on $(M,T^{1,0})$.
 Then $\pi^\ast\theta_0$ is a $Q$-flat contact form on $(\cM,\cT^{1,0})$.
 Since $\pi$ is a $k$-fold covering, it holds that
 \begin{equation*}
  \mQ^\prime(\cM,\cT^{1,0}) = \int_{\cM} Q_{\pi^\ast\theta}^\prime\,\pi^\ast\theta \wedge d(\pi^\ast\theta) = k\mQ(M^3,T^{1,0}) .
 \end{equation*}
 We conclude from \cref{q-prime-upper-bound} that $k\mQ(M,T^{1,0}) \leq 16\pi^2$.
\end{proof}

Recall that the fundamental group $\pi_1(M)$ of a manifold $M$ is \defn{residually finite} if for each non-identity element $x \in \pi_1(M)$, there is a homomorphism $\varphi \colon \pi_1(M) \to G$ onto a finite group $G$ such that $\varphi(x) \not= e$.
This homomorphism determines a finite connected cover $\pi \colon \cM \to M$ of degree $\lv G\rv$.
By iterating this procedure, we see that if $\pi_1(M)$ is infinite, then for each $N \in \bN$ there is a finite connected cover $\pi \colon \cM \to M$ of degree at least $N$.

We now prove \cref{sphere}.
Indeed, we prove a sharper result which uses \cref{cover-estimate} to bound the size of the fundamental group of a closed, embeddable CR manifold with nonnegative CR Yamabe constant and positive total $Q^\prime$-curvature.

\begin{theorem}
 \label{detailed-positive-q}
 Let $(M^3,T^{1,0})$ be a closed, universally embeddable CR manifold with $Y(M,T^{1,0}) \geq 0$ and $\mQ^\prime(M,T^{1,0})>0$.
 Then the underlying contact manifold $(M^3,\xi)$ is contact diffeomorphic to a quotient $(S^3/\Gamma,q_\ast\xi)$ of the standard contact three-sphere.
 Moreover, $\lv \Gamma \rv \leq 16\pi^2 / \mQ^\prime(M^3,T^{1,0})$.
\end{theorem}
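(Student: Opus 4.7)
The plan is to assemble \cref{cover-estimate} with three deep classification results: residual finiteness of closed three-manifold fundamental groups, the resolution of the Poincar\'e conjecture, and Eliashberg's classification of tight contact structures on $S^3$. First I would argue that $\pi_1(M)$ must be finite. If $\pi_1(M)$ were infinite, residual finiteness of three-manifold groups (a consequence of geometrization, via Hempel combined with Perelman and Agol--Wise) would yield, for any $N \in \bN$, a finite connected cover $\cM \to M$ of degree at least $N$. This contradicts the degree bound of \cref{cover-estimate}, namely that every finite connected cover has degree at most $16\pi^2/\mQ^\prime(M,T^{1,0})$.

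Once $\pi_1(M) =: \Gamma$ is known to be finite, the universal cover $\pi \colon \cM \to M$ is a closed, simply-connected three-manifold, which the Poincar\'e conjecture identifies with $S^3$. Pulling back $T^{1,0}$ as in the proof of \cref{cover-estimate} gives a CR structure $\cT^{1,0}$ on $S^3$, which is embeddable by the hypothesis of universal embeddability. Consequently the underlying contact structure $\cxi$ on $S^3$ is tight by the work of Eliashberg and Gromov. Eliashberg's classification of tight contact structures on $S^3$ then shows that $(S^3,\cxi)$ is contact diffeomorphic to the standard contact three-sphere $(S^3,\xi_{std})$.

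To finish, I would transport the action of $\Gamma$ as the deck transformation group of $\pi$ through this contact diffeomorphism; since $\Gamma$ acts freely on $\cM$ preserving $\cxi$, the conclusion is that $(M^3,\xi)$ is contact diffeomorphic to a quotient $(S^3/\Gamma,q_\ast\xi_{std})$. Applying \cref{cover-estimate} to the particular cover $\pi$ of degree $\lv \Gamma \rv$ gives the stated bound $\lv \Gamma \rv \leq 16\pi^2/\mQ^\prime(M,T^{1,0})$.

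The main obstacle in this plan is not any new analysis but ensuring the topological inputs apply in our setting: specifically, that residual finiteness of $\pi_1(M)$ for a closed (orientable) three-manifold is available without any Haken-type assumption on $M$. This is indeed a consequence of the full geometrization program together with the virtually Haken theorem, which I would cite rather than reprove. Everything else in the argument is a direct packaging of \cref{cover-estimate} and the cited classification theorems.
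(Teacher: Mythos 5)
Your proposal is correct and follows essentially the same route as the paper: residual finiteness of closed three-manifold groups plus \cref{cover-estimate} forces $\pi_1(M)$ to be finite, the Poincar\'e Conjecture identifies the universal cover with $S^3$, embeddability of the cover gives tightness, and Eliashberg's classification yields the standard contact sphere, with the order bound coming from applying \cref{cover-estimate} to the universal cover. The only cosmetic difference is your attribution of residual finiteness to Agol--Wise (geometrization plus Hempel suffices; the paper cites Aschenbrenner--Friedl--Wilton), which does not affect the argument.
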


\begin{proof}
 Recall~\cite{AschenbrennerFriedlWilton2015}*{Item~(C.29)} that every closed three-manifold has residually finite fundamental group.
 We readily deduce from \cref{cover-estimate} that $\pi_1(M)$ is finite.
 
 Let $(\cM,\cT^{1,0})$ be the universal cover of $(M,T^{1,0})$.
 The resolution of the Poincar\'e Conjecture~\cite{MorganTian2007}*{Theorem~0.1} implies that $\cM$ is diffeomorphic to $S^3$.
 \Cref{cover-estimate} then implies that $M \cong S^3/\Gamma$ for some finite group $\Gamma$ such that
 \begin{equation*}
  \lv \Gamma \rv \leq \frac{16\pi^2}{\mQ^\prime(M^3,T^{1,0})} .
 \end{equation*}
 Since $(\cM,\cT^{1,0})$ is embeddable, the underlying contact structure $\cxi := \Real(\cT^{1,0} \oplus \cT^{0,1})$ is tight~\cite{Geiges2008}*{Theorem~6.5.6}.
 The classification of tight contact structures on the three-sphere~\cite{Geiges2008}*{Theorem~4.10.1(a)} then implies that $(\cM,\cT^{1,0})$ is contact diffeomorphic to the standard contact three-sphere.
\end{proof}

\Cref{detailed-positive-q} immediately implies a CR invariant gap theorem.

\begin{corollary}
 Let $(M^3,T^{1,0})$ be a closed, universally embeddable CR manifold with $Y(M,T^{1,0}) \geq 0$ and $\mQ^\prime(M,T^{1,0})>8\pi^2$.
 Then $(M^3,\xi)$ is contact diffeomorphic to the standard contact three-sphere.
\end{corollary}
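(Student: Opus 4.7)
The plan is to deduce this corollary directly from \cref{detailed-positive-q}, using the quantitative bound on the deck group rather than the qualitative conclusion alone. By hypothesis, $(M^3,T^{1,0})$ satisfies all the assumptions of \cref{detailed-positive-q}, so I may apply that theorem to produce a finite group $\Gamma$ with $M \cong S^3/\Gamma$ as contact manifolds and with
\begin{equation*}
 \lv \Gamma \rv \leq \frac{16\pi^2}{\mQ^\prime(M,T^{1,0})} .
\end{equation*}

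The strengthened hypothesis $\mQ^\prime(M,T^{1,0}) > 8\pi^2$ then forces $\lv \Gamma \rv < 2$. Since $\lv \Gamma \rv$ is a positive integer, this gives $\lv \Gamma \rv = 1$, so $\Gamma$ is trivial and the quotient $S^3/\Gamma$ is the standard contact three-sphere itself. There are no genuine calculations to carry out and no real obstacle: the entire content is the numerical comparison $16\pi^2/\mQ^\prime < 2$, and all the geometric work (residual finiteness of $\pi_1(M)$, Poincaré Conjecture, and Eliashberg's classification of tight contact structures on $S^3$) has already been absorbed into \cref{detailed-positive-q}. The value $8\pi^2$ is sharp for this argument precisely because $\lv\Gamma\rv = 1$ is equivalent to $16\pi^2/\mQ^\prime < 2$.
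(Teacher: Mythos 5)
Your proposal is correct and matches the paper's intent exactly: the paper states that \cref{detailed-positive-q} ``immediately implies'' this corollary, and the implicit argument is precisely your numerical comparison $\lv\Gamma\rv \leq 16\pi^2/\mQ^\prime(M,T^{1,0}) < 2$, forcing $\Gamma$ to be trivial.
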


We conclude with a discussion of the universal embeddability hypothesis in \cref{sphere,detailed-positive-q}.
First, we construct nonspherical CR structures on lens spaces.

\begin{lemma}
 \label{lens-space}
 Given integers $p > q > 0$, define $\Gamma \colon S^3 \to S^3$ by
 \begin{equation*}
  \Gamma(z_1,z_2) := (\omega z_1 , \omega^q z_2) ,
 \end{equation*}
 where $\omega$ is a $p$-th root of unity.
 There is an $\varepsilon>0$ such that for all $t \in (-\varepsilon,\varepsilon)$, the CR structure $T_t^{1,0}$ on $S^3$ generated by
 \begin{equation*}
  Z_t := (\oz_1 \partial_{z_2} - \oz_2 \partial_{z_1}) + t\oz_1^{2q+2} (z_1 \partial_{\oz_1} - z_2 \partial_{\oz_1})
 \end{equation*}
 descends to a universally embeddable CR structure on $L(p,q) := S^3 / \lp \Gamma \rp$, where $\lp \Gamma \rp$ is the group generated by $\Gamma$.
 Moreover, if $t \not= 0$, then $\bigl(L(p,q),T_t^{1,0}\bigr)$ is not locally spherical.
\end{lemma}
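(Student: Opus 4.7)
The statement has four parts: (a) $T_t^{1,0}$ defines a CR structure on $S^3$; (b) it is $\langle\Gamma\rangle$-invariant and descends to $L(p,q)$; (c) the descended structure is universally embeddable; (d) it is not locally spherical when $t \neq 0$.

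Parts (a) and (b) I would handle by direct calculation. The perturbation term lies in the standard $T^{0,1}$; in fact it is a scalar multiple of $\overline{Z_0}$, so we may write $Z_t = Z_0 + t\bar z_1^{2q+2}Y$ for $Y$ a fixed section of the standard $T^{0,1}$. For $|t|$ small the condition $T_t^{1,0}\cap T_t^{0,1}=\{0\}$ and the nondegeneracy of the Levi form persist by continuity, so $T_t^{1,0}$ is a CR structure on $S^3$. A short computation shows $\Gamma_\ast$ scales $Z_0$ by $\omega^{q+1}$ and $\overline{Z_0}$ by $\bar\omega^{q+1}$; the exponent $2q+2$ on $\bar z_1$ is chosen precisely so that $\bar z_1^{2q+2}\overline{Z_0}$ scales by the same factor $\omega^{q+1}$ as $Z_0$. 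Hence $\Gamma_\ast T_t^{1,0}=T_t^{1,0}$, and the CR structure descends to $L(p,q)$.

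For (c), every connected finite cover of $L(p,q)$ is a lens space $L(p^\prime,q^\prime)$ with $p^\prime\mid p$ (possibly $S^3$), realized as the quotient of $(S^3,T_t^{1,0})$ by a subgroup of $\langle\Gamma\rangle$. It therefore suffices to prove that $(S^3,T_t^{1,0})$ is embeddable for $|t|$ small and that the embedding can be arranged to descend to each intermediate quotient. Embeddability of $(S^3,T_t^{1,0})$ follows from the small-deformation theory of $\dbbar$ on the standard CR three-sphere: since the coefficient $\bar z_1^{2q+2}$ extends smoothly to the closed unit ball, the deformed CR structure extends to an integrable almost-complex structure on a one-sided collar of $S^3$, and Kohn's closed-range theorem then provides enough CR functions to embed. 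To descend, average any finite separating family of CR functions over each relevant subgroup of $\langle\Gamma\rangle$; the resulting invariants still separate points and directions on the quotient, yielding a CR embedding of each finite cover.

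For (d), it suffices to show that $(S^3,T_t^{1,0})$ is not locally spherical for small $t\neq 0$, since local sphericity passes to and from finite covers. I would compute the Cartan--Chern--Moser pseudoconformal curvature for a convenient pseudohermitian contact form and track its first-order $t$-derivative at $t=0$. The linearization is a specific CR-invariant differential operator applied to the scalar $\bar z_1^{2q+2}$, and the Fourier weight associated with the exponent $2q+2 \geq 4$ lies outside the kernel of this operator, so the curvature is nonzero on a dense open set whenever $t$ is small and nonzero. The main technical obstacle is the embeddability step: unlike the Rossi spheres --- prototypical non-embeddable deformations of the standard CR structure --- the present perturbation is tailored to preserve embeddability, and carefully verifying this via extension of the complex structure and Kohn's theorem is the delicate ingredient of the proof.
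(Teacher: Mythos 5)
Your parts (a), (b), and (d) track the paper's argument closely: the equivariance computation $\Gamma_\ast Z_t = \omega^{q+1} Z_t$ (the exponent $2q+2$ on $\oz_1$ exactly compensating the conjugate weight $\bar\omega^{q+1}$ picked up by $\overline{Z_0}$) is precisely how the paper shows the structure descends to $L(p,q)$, and non-sphericity for $t\neq 0$ is obtained, as you propose, from the linearization of the Cartan tensor (the paper cites a computation of Curry--Ebenfelt for this). Your reduction of universal embeddability to embeddability of $(S^3,T_t^{1,0})$ together with descent to the intermediate quotients $S^3/\langle\Gamma^k\rangle$ is also reasonable, since $\pi_1(L(p,q))$ is cyclic and every connected finite cover is such a quotient.

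The gap is in the embeddability of $(S^3,T_t^{1,0})$ itself, which you rightly identify as the delicate point but then justify by an argument that cannot work. You claim that because the coefficient $\oz_1^{2q+2}$ extends smoothly to the closed ball, the deformed CR structure extends to an integrable complex structure on a one-sided collar, whence Kohn's closed-range theorem applies. But the Rossi deformation $Z_t = Z_0 + t\,\overline{Z_0}$ has coefficient equal to the constant $t$, which certainly extends smoothly to the ball, and the Rossi spheres are non-embeddable for all small $t\neq 0$; so smooth extendability of the coefficient is not the relevant criterion, and your argument would prove too much. What actually makes this example embeddable --- and the only reason the hypothesis $2q+2\geq 4$ matters for this part of the statement --- is the Fourier weight of the deformation tensor with respect to the circle action: Burns and Epstein (Theorem~5.3 of their 1990 embeddability paper, which is exactly what the paper invokes) show that sufficiently small deformations of the standard CR three-sphere whose deformation tensor is supported in Fourier weights at least $4$ are embeddable, whereas negative-weight deformations such as the Rossi family are not. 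Your proposal uses $2q+2\geq 4$ only in the non-sphericity step and never brings a positivity-of-weight criterion (Burns--Epstein, or the Lempert/Bland/Epstein fillability results) to bear on embeddability; without that input, the key step of the proof is unsupported.
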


\begin{proof}
 First note that $2q+2 \geq 4$.
 Therefore the CR manifolds $(S^3,T_t^{1,0})$ are embeddable for $t$ sufficiently close to zero~\cite{BurnsEpstein1990}*{Theorem~5.3}.
 Next observe that
 \begin{equation*}
  \Gamma_\ast Z_t = \omega^{q+1}Z_t .
 \end{equation*}
 In particular, $\Gamma$ preserves the bundle $T_t^{1,0}$.
 Since $\Gamma$ is a diffeomorphism, we conclude that it is a CR automorphism of $(S^3,T_t^{1,0})$.
 Since $\Gamma$ has no fixed points and $\lp \Gamma \rp$ is discrete, we conclude that $T_t^{1,0}$ descends to a CR structure, still denoted $T_t^{1,0}$, on $L(p,q)$.
 The final conclusion follows from a computation~\cite{CurryEbenfelt2021}*{Lemma~6.2} of the linearization of the Cartan tensor.
\end{proof}

The lens spaces considered in \cref{lens-space} are all universally tight.
However, there are lens spaces which admit tight, but not universally tight, contact structures~\citelist{ \cite{Giroux2000}*{Th\'eor\`eme~1.1} \cite{Honda2000}*{Theorem~2.1 and Proposition~5.1} }.

Second, we point out that the total $Q^\prime$-curvature of the Rossi spheres is bounded above by $16\pi^2$, with equality if and only if the CR structure is locally spherical.
Since the Rossi spheres have positive CR Yamabe constant~\cite{ChanilloChiuYang2010}, we see that the $\bZ_2$-quotients of the Rossi spheres are consistent with \cref{detailed-positive-q}.

\begin{lemma}
 \label{rossi}
 Let $(S^3,T_t^{1,0})$, $t \in (-1, 1)$, be the Rossi sphere, where $T_t^{1,0}$ is spanned by
 \begin{equation*}
  Z_t := (\oz_2 \partial_{z_1} - \oz_1 \partial_{z_2}) + t (z_2 \partial_{\oz_1} - z_1 \partial_{\oz_2})
 \end{equation*}
 Then
 \begin{enumerate}
  \item $Y(S^3,T_t^{1,0}) > 0$; and
  \item $\int Q^\prime \leq 16\pi^2$ with equality if and only if $t=0$.
 \end{enumerate}
\end{lemma}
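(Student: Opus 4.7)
The plan is to prove the two parts separately. Part~(i) is immediate from the work of Chanillo--Chiu--Yang~\cite{ChanilloChiuYang2010} already cited in the paragraph preceding the lemma.

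For part~(ii), my strategy is to imitate the proof of \cref{q-prime-upper-bound} in the non-embeddable setting. The key observation is that both the Jerison--Lee inequality $Y(M,T^{1,0}) \leq \pi$ recalled in the proof of \cref{cr-yamabe-estimate} and Jerison--Lee's existence theorem for the CR Yamabe problem when $Y < \pi$ are valid on arbitrary closed CR three-manifolds, and do not require embeddability. So, given part~(i), I would fix a Yamabe contact form $\theta^\ast$ on $(S^3, T_t^{1,0})$ normalized so that $R^{\theta^\ast} = 4Y$ is constant and $\int \theta^\ast \wedge d\theta^\ast = 1$. Using $\int \Delta_b R\,\theta^\ast \wedge d\theta^\ast = 0$, this gives
\begin{equation*}
 \int_{S^3} Q^\prime_{\theta^\ast}\,\theta^\ast \wedge d\theta^\ast = 16 Y^2 - 4\int_{S^3} \lv A_{11} \rv^2\,\theta^\ast \wedge d\theta^\ast \leq 16 Y^2 \leq 16\pi^2,
\end{equation*}
which is the desired estimate. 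For the equality case, both $Y = \pi$ and $A_{11} \equiv 0$ in $\theta^\ast$ must hold, so the pseudohermitian structure has constant positive Webster curvature and vanishing torsion. By the classification of such pseudohermitian three-manifolds (cf.~\cite{Tanno1969}), $(S^3, T_t^{1,0})$ is CR equivalent to the standard CR three-sphere. Since the Rossi spheres are non-embeddable for $t \neq 0$, this forces $t = 0$.

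The hardest step will be handling the borderline case $Y(S^3, T_t^{1,0}) = \pi$ with $t \neq 0$, where the Yamabe minimizer may fail to exist and the argument above breaks down. Note that the equality case in Jerison--Lee's sharp estimate has only been established on embeddable CR manifolds, so one cannot directly rule out $Y(S^3, T_t^{1,0}) = \pi$ for the Rossi spheres. I would close this gap either by (a) a first-variation argument along the family $\{T_t^{1,0}\}$ combined with a global continuity-in-$t$ argument, showing $Y(t) < \pi$ whenever $t \neq 0$, or (b) an explicit computation of the Webster curvature and torsion of $T_t^{1,0}$ in the standard contact form on $S^3$, exploiting the diagonal $U(1)$-action $(z_1, z_2) \mapsto (e^{i\alpha} z_1, e^{-i\alpha} z_2)$ which preserves each $T_t^{1,0}$.
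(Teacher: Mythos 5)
Your part (i) coincides with the paper's (both just cite \cite{ChanilloChiuYang2010}), but your part (ii) has a genuine gap --- indeed two. First, the one you acknowledge: Jerison--Lee's existence theorem only produces a Yamabe contact form when $Y < \pi$, and Takeuchi's rigidity for the equality case $Y=\pi$ requires embeddability, which fails for the Rossi spheres with $t\neq 0$. Neither of your proposed fixes is carried out, and your option (b) --- explicit computation of the pseudohermitian invariants in the standard contact form --- is essentially the paper's entire proof, so deferring it defers all of the real work. Second, and more structurally: for a non-embeddable CR manifold the quantity $\int Q^\prime_\theta\,\theta\wedge d\theta$ is \emph{not} independent of the choice of contact form. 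The invariance established in \cref{q-prime-no-q-flat} rests on $\int \Upsilon\,P\Upsilon \geq 0$, i.e.\ nonnegativity of the CR Paneitz operator, which (given $Y>0$) is equivalent to embeddability~\cite{Takeuchi2019} and therefore fails for the Rossi spheres with $t\neq 0$. Concretely, the contact form $\theta = i(z_1\,d\oz_1+z_2\,d\oz_2)$ used in the lemma has constant Webster curvature $R = 2(1+t^2)/(1-t^2)$, which after normalizing to unit volume becomes $4\pi(1+t^2)/(1-t^2) > 4\pi \geq 4Y$ for $t \neq 0$; so $\theta$ is not a Yamabe contact form, and your bound on $\int Q^\prime_{\theta^\ast}$ for a Yamabe contact form $\theta^\ast$ gives no a priori control on the quantity the lemma actually asserts something about.

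The paper's proof is a direct computation: it quotes the formulas of Chanillo--Chiu--Yang for the standard contact form,
\begin{equation*}
 R = \frac{2(1+t^2)}{1-t^2}, \qquad Q^\prime = \frac{4(1-14t^2+t^4)}{(1-t^2)^2},
\end{equation*}
observes that $Q^\prime \leq 4$ pointwise with equality if and only if $t=0$, and multiplies by $\int \theta\wedge d\theta = 4\pi^2$. To salvage your approach you would have to carry out your option (b), which amounts to reproducing exactly this computation.
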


\begin{remark}
 For $\lv t\rv$ small, Cheng, Malchiodi and Yang~\cite{ChengMalchiodiYang2019}*{Theorem~1.2} proved that $Y(S^3,T_t^{1,0}) = \pi$, showing that the embeddability assumption of \cref{cr-yamabe-estimate} is necessary.
 In light of their observation, \cref{rossi} suggests that the universal embeddability assumption of \cref{sphere} might be weakened to embeddability.
\end{remark}

\begin{proof}
 Set $\theta := i\,(z_1 \, d\oz_1 + z_2 \, d\oz_2)$.
 Then~\cite{ChanilloChiuYang2010}*{Proposition~2.5} the Webster curvature and $Q^\prime$-curvature of $(S^3,T_t^{1,0},\theta)$ are
 \begin{align*}
  R & = \frac{2(1+t^2)}{1-t^2} , \\
  Q^\prime & = \frac{4(1-14t^2 + t^4)}{(1-t^2)^2} ,
 \end{align*}
 respectively.
 In particular, $Q^\prime \leq 4$ with equality if and only if $t=0$.
 The final conclusion follows from the readily-verified fact $\int \theta \wedge d\theta = 4\pi^2$.
\end{proof}

\section*{Acknowledgements}
We thank John Pardon for a helpful discussion about residual finiteness of the fundamental groups of closed three-manifolds.
JSC would like to thank the University of Washington for providing a productive research environment while the authors were completing this work.
JSC was partially supported by the Simons Foundation (Grant \# 524601).
PY was partially supported by the Simons Foundation (Grant \# 615589).

\bibliography{bib}
\end{document}